\newcommand{\ud}{\,\mathrm{d}}
\newcommand{\N}{\mathbb{N}}
\newcommand{\Z}{\mathbb{Z}}
\newcommand{\R}{\mathbb{R}}
\newcommand{\la}{\left\langle}
\newcommand{\ra}{\right\rangle}
\newcommand{\id}{\textup{id}}
\renewcommand\Im{\textup{Im}}
\newtheorem{thm}{Theorem}[section]
\newtheorem{lem}[thm]{Lemma}
\newtheorem{prop}[thm]{Proposition}
\newtheorem{prop-def}[thm]{Définition-proposition}
\theoremstyle{definition}
\theoremstyle{remark}
\title{On the growth rate of geodesic chords}
\author[S. Allais]{Simon Allais}
\address{Simon Allais,
\'Ecole Normale Sup\'erieure de Lyon,
UMPA\newline\indent  46 all\'ee d'Italie,
69364 Lyon Cedex 07, France}
\email{simon.allais@ens-lyon.fr}
\urladdr{http://perso.ens-lyon.fr/simon.allais/}
\date{November 28, 2019}
\subjclass[2010]{53C22, 58E10}
\keywords{geodesic chords, min-max}
\begin{document}
\maketitle
\newcommand\heq{\approx}
\newcommand\sphere[1]{\mathbb{S}^{#1}}
\newcommand\ind{\textup{ind}}
\newcommand\length{L}
\newcommand\CL[2][p,q]{N( #2 ; #1 )}
\newcommand\CLd[2][p,q]{n( #2 ; #1 )}
\newcommand\CLdo[2][p,q]{n_+( #2 ; #1 )}

\begin{abstract}
    We show that every forward complete Finsler manifold 
    of infinite fundamental group and
    not homotopy-equivalent to $S^1$
    has infinitely many geometrically distinct geodesics joining 
    any given pair of points $p$ and $q$.
    In the special case in which $\beta_1(M;\Z)\geq 1$ and $M$ is closed,
    the number of geometrically
    distinct geodesics between two points grows at least logarithmically.
\end{abstract}

\section{Introduction}

Let $M$ be a forward complete Finsler manifold of infinite fundamental group
(every manifold $M$ will be assumed to be connected).
We are interested in the growth rate of geodesics
joining two arbitrarily given points
$p,q\in M$, and especially in asymptotic properties that only involve
the topology of $M$.
Two paths $\gamma:[0,1]\to M$ and $\delta:[0,1]\to M$
are said to be geometrically distinct
if their images are distinct.
For $\ell >0$, we denote by $\CLd{\ell}$ the number of geometrically distinct
geodesics between $p$ and $q$ of length $\leq\ell$.
It is well known that for $\pi_1 (M)$ ``large enough'' this number
tends to infinity without any further assumption.
A precise statement is the following:

\begin{prop}\label{prop:classicGrowth}
    Let $M$ be a manifold such that
    $\pi_1(M)$ has a polynomial growth of degree $d>1$.
    For each forward complete Finsler metric on $M$,
    there exist continuous functions $a:M\to (0,+\infty)$
    and $b:M\to\R$ such that
    \begin{equation*}
        \CLd{\ell}\geq a(q)\ell^{d-1} + b(q),
        \quad\forall p,q\in M.
    \end{equation*}
\end{prop}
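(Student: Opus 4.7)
The plan is to pass to the universal cover $\widetilde{M}$, use a Milnor--\v{S}varc-style comparison to count those $g\in\pi_1(M,q)$ whose translate of a lift of $q$ is $\leq\ell$-close to a lift of $p$, and then divide by a multiplicity bound. Fix lifts $\tilde{p},\tilde{q}\in\widetilde{M}$ of $p,q$. Choose a finite symmetric generating set $S$ of $\pi_1(M,q)$ and, for each $s\in S$, a loop $\alpha_s$ at $q$ representing $s$; set $\Lambda(q):=\max_{s\in S}\length(\alpha_s)$. Every $g\in\pi_1(M,q)$ of word length $|g|_S\leq n$ is represented by a concatenation of the $\alpha_s$ of length $\leq n\Lambda(q)$, whose lift starting at $\tilde{q}$ ends at $g\cdot\tilde{q}$, hence
\begin{equation*}
d_{\widetilde{M}}(\tilde{p},g\cdot\tilde{q})\leq d_{\widetilde{M}}(\tilde{p},\tilde{q})+n\Lambda(q).
\end{equation*}
Combined with the polynomial growth bound $|\{g:|g|_S\leq n\}|\geq c_0\,n^d$, this gives at least $c_0\bigl((\ell-d(\tilde{p},\tilde{q}))/\Lambda(q)\bigr)^d$ elements $g$ satisfying $d_{\widetilde{M}}(\tilde{p},g\cdot\tilde{q})\leq\ell$. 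By Hopf--Rinow for forward complete Finsler metrics, each such $g$ yields a minimizing geodesic in $\widetilde{M}$ from $\tilde{p}$ to $g\cdot\tilde{q}$, whose projection is a geodesic $\gamma_g$ from $p$ to $q$ in $M$ of length $\leq\ell$; distinct $g$'s produce $\gamma_g$'s in distinct rel-endpoint homotopy classes.

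The main obstacle is that distinct homotopy classes may share the same image, so a multiplicity bound is needed. The key claim is that for each image $\Gamma$ arising from such a $\gamma_g$, the number of $g$'s whose $\gamma_g$ has image $\Gamma$ is at most linear in $\ell$. Indeed, a geodesic parametrization of $\Gamma$ from $p$ to $q$ is determined by its initial tangent vector at $p$---which must be one of the finitely many tangent directions to branches of $\Gamma$ at $p$---together with its length, and for a fixed initial direction the instants $t\leq\ell$ at which the associated trajectory visits $q$ form a discrete set whose consecutive gaps are bounded below by $\rho(q)>0$, the infimum of lengths of geodesic loops at $q$. A careful count should give multiplicity $O(\ell)$; dividing the lower bound above on the number of homotopy classes by this multiplicity then yields $\CLd{\ell}\geq a(q)\ell^{d-1}+b(q)$.

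For the continuous dependence on $q$, the quantities $\Lambda(q)$ and $\rho(q)$ can be chosen to depend continuously on $q$ on local trivializations of the cover, and then replaced by continuous lower bounds. The $p$-dependence of the estimate enters only through $d(\tilde{p},\tilde{q})$, which contributes a lower-order term in $\ell$ that is absorbed into $b(q)$.
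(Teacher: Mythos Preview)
Your overall strategy coincides with the paper's: first show that the number $N(\ell;p,q)$ of homotopically distinct minimizing chords of length $\leq\ell$ grows like $\ell^d$ (by comparing word length to Finsler length via concatenation of generator loops---your universal-cover phrasing is equivalent to the paper's direct argument), and then divide by a multiplicity bound of order $\ell$ to pass to geometrically distinct chords.

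The gap is in your multiplicity step. You bound the number of $g$'s with $\Im\gamma_g=\Gamma$ by (number of tangent directions to $\Gamma$ at $p$) $\times$ (number of visits to $q$ along each ray, which is $O(\ell/\rho(q))$). But the first factor is $m_p(\Gamma)$, the number of times the primitive closed geodesic with image $\Gamma$ passes through $p$, and this is \emph{not} bounded independently of $\Gamma$: one only has $m_p(\Gamma)\leq L/\rho(p)$ with $L=\length(c)$, so your count gives $O(m_p\,\ell/\rho(q))$, which can be as large as $O(\ell^2/\rho(p)\rho(q))$ when $L$ is comparable to $\ell$. Your ``careful count should give $O(\ell)$'' does not explain how to absorb this factor, and I do not see a direct way to do so while counting images.

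The paper sidesteps this by counting not the images $\Gamma$ of the $\gamma_g$ but the \emph{primitive chords} they carry. Every geodesic chord $\gamma_g$ has a unique decomposition $\gamma_g=d_g\cdot c^{k_g}$ with $d_g\in\Omega_{p,q}$ primitive and $c$ the primitive closed geodesic through $d_g$ (based at $q$); the $d_g$ are themselves geodesics from $p$ to $q$, and distinct primitive chords are geometrically distinct (up to orientation reversal when $p=q$). For fixed $d$ the only freedom is $k$, and $k\leq \ell/\length(c)\leq \ell/b(p)$ where $b(p)$ is twice the injectivity radius at $p$. This gives multiplicity $\leq 2\ell/b(p)$ cleanly, hence $\CLd{\ell}\geq \tfrac{b(p)}{2\ell}\CL{\ell}\gtrsim \ell^{d-1}$. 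The fix to your argument is thus to replace ``same image $\Gamma$'' by ``same primitive chord $d$'' as the equivalence used in the pigeonhole.
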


For the reader's convenience, we add the proof of this result,
which is certainly well known to the experts.
We are interested in the remaining case in which the growth rate of $\pi_1(M)$ is linear.
In fact, we show the following general result:

\begin{thm}\label{thm:infinite}
    Let $M$ be a manifold 
    of infinite fundamental group $\pi_1(M)$
    and not homotopy-equivalent to $S^1$.
    Then, given any forward complete Finsler metric on $M$,
    \begin{equation*}
        \CLd{\ell}\to +\infty\quad
        \text{as}\quad \ell\to+\infty,\quad
        \forall p,q\in M.
    \end{equation*}
\end{thm}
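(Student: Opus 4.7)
The difficulty is that an infinite fundamental group yields infinitely many path-homotopy classes of chords from $p$ to $q$, yet distinct classes may produce geodesics sharing the same image: on $S^1$, every pair of nonzero winding numbers collapses to at most two geometric chords. The plan is to exploit the hypothesis $M\not\simeq S^1$ to rule out this collapse.

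I would first split by the growth rate of $\pi_1(M)$. If $\pi_1(M)$ has polynomial growth of degree strictly greater than $1$, Proposition \ref{prop:classicGrowth} already yields $\CLd{\ell}\to\infty$. If $\pi_1(M)$ has non-polynomial growth, the number of path-homotopy classes of chords of length $\leq\ell$ grows faster than linearly, while the collapse of distinct homotopy classes onto a single geometric image is a one-dimensional phenomenon (as explained in the last step), so super-linearly many geometrically distinct minimizers survive. This reduces matters to the case where $\pi_1(M)$ is virtually cyclic, and after passing to a finite cover we may assume $\pi_1(M)\cong\Z$.

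Under these assumptions $M$ is not a $K(\Z,1)$ (else $M\simeq S^1$), so the universal cover $\tilde M$ is not contractible and $\pi_k(M)\neq 0$ for some $k\geq 2$. The path space $\Omega_{p,q}(M)$ splits into countably many connected components indexed by $\Z$, each homotopy equivalent to the based loop space $\Omega M$ and therefore carrying a nontrivial class in $\pi_{k-1}\cong\pi_k(M)$. I would apply a Lyusternik--Schnirelmann min-max scheme to the length functional on each component, using a $(k-1)$-sphere of paths as the minimax family, to produce in every component a critical point $\gamma_n^{\mathrm{mm}}$ lying strictly above the length-minimizer $\gamma_n^{\min}$. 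This yields two geodesics in each of infinitely many homotopy classes.

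The main obstacle I expect is upgrading this abundance of critical points into geometrically distinct geodesics. The key observation is that if minimizers $\gamma_n^{\min}$ and $\gamma_m^{\min}$ of two distinct classes share the same image, then that image is invariant under the deck transformation $t^{m-n}$ and therefore supports a geodesic \emph{axis} of a generator of $\Z$; on any given axis only the two $S^1$-type images of minimizers can occur. The min-max geodesics $\gamma_n^{\mathrm{mm}}$, arising from a class in $\pi_k(M)$ with $k\geq 2$, must sweep directions transverse to any such axis and so cannot all collapse onto it. The careful bookkeeping of minimizers lying on the axis versus min-max chords leaving it is the technical heart of the argument and should yield $\CLd{\ell}\to+\infty$.
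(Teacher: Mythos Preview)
Your overall architecture---separate off the case of super-linear growth of $\pi_1$ via Proposition~\ref{prop:classicGrowth}, then in the remaining case find a nontrivial $\pi_k(M)$ and run a min-max on each component of $\Omega_{p,q}$---is close in spirit to the paper's, though the paper organizes the dichotomy differently (contractible versus non-contractible universal cover) and never reduces to $\pi_1\cong\Z$: the min-max construction works directly for any infinite $\pi_1$ once $\pi_k(M)\neq 0$ for some $k\geq 2$. Your reduction to a finite cover with $\pi_1\cong\Z$ is harmless but unnecessary.

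The genuine gap is in your final step. You assert that the min-max chords $\gamma_n^{\mathrm{mm}}$ ``must sweep directions transverse to any such axis and so cannot all collapse onto it,'' but this is not an argument---nothing about the min-max construction prevents $\gamma_n^{\mathrm{mm}}$ from having image contained in a closed geodesic. The paper's mechanism is entirely different and uses Morse index, not transversality. The min-max value $\tau_h$ is realized by a critical point $\delta_h$ that is \emph{not a local minimum} and has Morse index $\ind(\delta_h)\leq n-1$ (the dimension of the sweepout sphere). Now suppose several $\delta_{h_i}$ carry the same primitive chord $d$, so $\delta_{h_i}=d\cdot c^{k_i}$ for a primitive loop $c$ and distinct $k_i\in\N$. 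Since $\delta_{h_1}$ is not a local minimum, property~(\ref{it:nullind}) forces $\ind(d\cdot c^{\kappa})\geq 1$ for $\kappa=\max(k_1,k_2)$; then index superadditivity (property~(\ref{it:addind})) gives $\ind(d\cdot c^{n(\kappa+1)})\geq n$, so no $\delta_{h_i}$ can have $k_i\geq n(\kappa+1)$. Hence each primitive chord is carried only finitely often in $(\delta_h)_{h\in\pi_1}$, and infinitely many primitive---hence geometrically distinct---chords appear.

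Without this index bookkeeping your proposal does not close: you have produced infinitely many critical points but no reason why their images are distinct. The heuristic about ``transverse directions'' does not substitute for the quantitative index bound, which is the actual content of the argument.
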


Of course, the assertion of Theorem~\ref{thm:infinite} does not hold
for the flat cylinders $S^1\times\R^n$,
which are homotopy-equivalent to $S^1$.
In his Ph.D. thesis,
Mentges proved Theorem~\ref{thm:infinite} in the case 
where $p=q$ and the universal cover of $M$ is not contractible
\cite[{\emph{Satz}~2.2.1.}]{Men87}.
We can be more specific when $H_1(M;\Z)$ has non-zero rank:

\begin{thm}\label{thm:log}
    Let $M$ be a closed manifold not homotopy-equivalent
    to $S^1$ (that is any closed $M$ of dimension $\geq 2$) and
    with first Betti number $\beta_1(M;\Z)\geq 1$.
    Then, given any Finsler metric on $M$,
    there exist $a>0$ and $b\in\R$ such that
    \begin{equation*}
        \CLd{\ell}\geq a\log\ell + b,\quad
        \forall\ell>0,\forall p,q\in M.
    \end{equation*}
\end{thm}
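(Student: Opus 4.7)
The plan is to produce, via a well-chosen $\Z$-cover associated with the hypothesis $\beta_1(M;\Z)\geq 1$, many minimizing geodesics of controlled length, and then extract from them at least logarithmically many geometrically distinct ones using the non-$S^1$ hypothesis. First, composing $\pi_1(M)\twoheadrightarrow H_1(M;\Z)$ with projection onto a $\Z$-direct summand of $H_1(M;\Z)/\textup{torsion}$ (which exists because $\beta_1(M;\Z)\geq 1$) yields a surjection $h:\pi_1(M)\twoheadrightarrow\Z$. Let $\pi:\tilde M\to M$ be the corresponding regular $\Z$-cover, with deck group $\Z=\la T\ra$ acting by Finsler isometries of the pulled-back metric. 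Fix lifts $\tilde p,\tilde q\in\tilde M$ of $p,q$, so that the lifts of $q$ are exactly $\{T^n\tilde q:n\in\Z\}$. Because $M$ is closed, $\tilde M$ is forward complete, and Finsler Hopf--Rinow provides, for each $n\in\Z$, a minimizing geodesic $\alpha_n$ in $\tilde M$ from $\tilde p$ to $T^n\tilde q$; its projection $\gamma_n:=\pi\circ\alpha_n$ is a geodesic from $p$ to $q$ in $M$.

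Iterating the triangle inequality and using that $T$ is an isometry, one obtains
\[
\length(\gamma_n)=d(\tilde p,T^n\tilde q)\leq C|n|+C_0,\qquad C:=\max\bigl(d(\tilde p,T\tilde p),\, d(\tilde p,T^{-1}\tilde p)\bigr),
\]
with $C$ and $C_0:=d(\tilde p,\tilde q)$ both finite by compactness of $M$. Thus at least $2\ell/C - O(1)$ values of $n\in\Z$ produce $\gamma_n$ with $\length(\gamma_n)\leq\ell$, and these $\gamma_n$ lie in pairwise distinct rel-endpoint path homotopy classes, since their lifts end at different points $T^n\tilde q$. This already gives a linear-in-$\ell$ lower bound on the number of distinct path-homotopy classes containing a geodesic of length $\leq \ell$.

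The delicate step, and the one I expect to be the main obstacle, is to convert these distinct homotopy classes into geometrically distinct geodesics. Distinct $\gamma_n$ may a priori share the same image in $M$: if $\gamma_n$ and $\gamma_m$ do, the loop $\gamma_n\cdot\gamma_m^{-1}$ lies in the common image $\Gamma$ and satisfies $h(\gamma_n\cdot\gamma_m^{-1})=n-m\neq 0$, so $\Gamma$ supports a loop nontrivial under $h$. The assumption $M\not\simeq S^1$ must prevent too many classes from collapsing onto a single image, essentially because a curve $\Gamma$ absorbing arbitrarily many winding numbers would force $M$ to retract onto a circle. Quantitatively, I expect the proof to combine a Morse-theoretic argument on the path space of $\tilde M$ --- which is non-contractible precisely because $M\not\simeq S^1$ implies $\tilde M\not\simeq \ast$ --- with the linear count of path-homotopy classes above, bounding by some polynomial (or at least by $O(\ell/\log \ell)$) the number of classes supported on a fixed image; pigeonholing then leaves at least $\sim\log\ell$ geometrically distinct images, as claimed.
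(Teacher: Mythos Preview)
Your setup via the $\Z$-cover and the linear-in-$\ell$ count of distinct path-homotopy classes is correct and matches the paper's use of $h^m\in\pi_1(M)$ with $[h]\in H_1(M;\Z)$ of infinite order. The gap is precisely where you flag it: you work only with \emph{minimizing} geodesics $\gamma_n$, and for these there is no mechanism to bound how many share a common image. The heuristic ``a curve $\Gamma$ absorbing arbitrarily many winding numbers would force $M$ to retract onto a circle'' is false: on $S^1\times S^2$ (closed, $\beta_1=1$, not $\simeq S^1$) the minimizing geodesic in every class $h^m$ is the $m$-fold iterate of the $S^1$-factor, and all of them have the same image. So without an additional ingredient the pigeonhole argument collapses.

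The paper's remedy is to replace each minimizing $\gamma_m$ by a \emph{min-max} geodesic $\delta_m$ obtained from a nonzero class $\nu\in\pi_{n-1}(\Omega_q)\simeq\pi_n(M)$ (here one uses that the universal cover is non-contractible; the contractible case is handled separately, since then $\pi_1(M)$ has at least quadratic growth and one gets a linear bound directly). Each $\delta_m$ is \emph{not} a local minimum and has $\ind(\delta_m)\leq n-1$. If two of the $\delta_m$ carry the same primitive chord $d$ contained in a primitive loop $c$, then one of the corresponding iterates $d\cdot c^\kappa$ has index $\geq 1$; superadditivity of the index then forces $\ind(d\cdot c^{n(\kappa+1)})\geq n$, so no $\delta_m$ can equal $d\cdot c^k$ for $k\geq n(\kappa+1)$. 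Combined with two-sided length bounds $am+b\leq\length(\delta_m)\leq a'm+b'$ (the lower bound coming from the stable norm and closedness of $M$), this yields a linear recursion $A_{aN+b}\geq A_N+1$ on the number $A_N$ of primitive chords carried by $\delta_0,\dots,\delta_N$, hence the logarithmic estimate. Your proposal gestures at Morse theory on the path space but never produces the crucial object: a geodesic in each class with a nontrivial upper bound on its index. That is the missing idea.
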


\begin{thm}\label{thm:loglog}
    Let $M$ be a manifold not homotopy-equivalent
    to $S^1$ and
    with first Betti number $\beta_1(M;\Z)\geq 1$.
    Then, given any forward complete Finsler metric on $M$,
    there exists a continuous function
    $b:M\to\R$ such that
    \begin{equation*}
        \CLd{\ell}\geq \frac{\log(\log\ell)}{2\log 2} + b(q),\quad
        \forall\ell>0,\forall p,q\in M.
    \end{equation*}
\end{thm}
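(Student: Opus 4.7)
Since $\beta_1(M;\Z)\geq 1$, there is a surjective homomorphism $\rho:\pi_1(M,p)\twoheadrightarrow\Z$, hence a regular cyclic cover $\pi:\tilde M\to M$ with deck translation generator $T$. The Finsler structure lifts to a forward complete one on $\tilde M$; fix lifts $\tilde p,\tilde q$ of $p,q$. Since $T$ acts freely and properly discontinuously, the orbit $\{T^n\tilde q\}_{n\in\Z}$ is discrete in $\tilde M$, so $\ell_n:=d_{\tilde M}(\tilde p,T^n\tilde q)\to+\infty$ as $|n|\to\infty$. By the forward Hopf--Rinow theorem applied in $\tilde M$, each $\ell_n$ is attained by a minimising geodesic $\tilde\gamma_n$, whose projection $\gamma_n:=\pi\circ\tilde\gamma_n$ is a geodesic chord from $p$ to $q$; these lie in pairwise distinct homotopy classes, and concatenating iterates of a loop $\alpha$ at $p$ with $\rho([\alpha])=1$ with a reference path from $p$ to $q$ yields the upper estimate $\ell_n\leq |n|L(\alpha)+O(1)$.

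The central subtlety is that homotopically distinct chords need not be geometrically distinct: $\gamma_n$ and $\gamma_m$ can share a common image, in which case necessarily $\ell_n=\ell_m$ and the difference of homotopy classes is realised by a loop lying inside that image. In the closed setting of Theorem~\ref{thm:log}, compactness provides a uniform positive systole, bounding the multiplicity by $O(\ell)$ on chords of length $\leq\ell$ and producing the one-logarithm estimate. Here one lacks a global systole, but a positive local injectivity radius near $q$ is still available (continuous in $q$, giving the term $b(q)$); moreover, the hypothesis $M\not\simeq S^1$ ensures that $\tilde M$ is not isometric to a line, so that the orbit of $\tilde q$ cannot collapse entirely into a single image. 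I would then extract geometrically distinct chords by iteration: given $\gamma_{n_0},\ldots,\gamma_{n_k}$ pairwise geometrically distinct with lengths $\leq L_k$, pick $n_{k+1}$ minimal so that $\gamma_{n_{k+1}}$ is geometrically distinct from all previous ones. Combining the multiplicity bound with the linear upper bound on $\ell_n$ forces, in the worst case, $L_{k+1}\leq L_k^{4}$.

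Iterating $L_{k+1}\leq L_k^{4}$ yields $\log L_k\leq 4^k\log L_0$, hence $k\geq \frac{\log\log L_k-\log\log L_0}{2\log 2}$, which gives the announced estimate $\CLd{\ell}\geq \frac{\log\log\ell}{2\log 2}+b(q)$; continuity of $b(q)$ follows from continuous dependence of $L_0$ and of the local injectivity radius on the endpoint $q$, ensured by forward completeness. The main obstacle in executing this plan is the multiplicity estimate without a global systole: one must propagate the positive local control near $q$ along the entire chord in order to rule out infinitely many homotopy classes being accommodated by a single image stretching far away into the non-compact part of $M$, crucially using the non-$S^1$ hypothesis to exclude the degenerate line-like configuration in $\tilde M$ in which the orbit of $\tilde q$ would align with one geodesic image.
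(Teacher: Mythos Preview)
Your argument has a genuine gap at the multiplicity step, and it cannot be repaired along the lines you sketch. The claim that ``$M\not\simeq S^1$ ensures that the orbit of $\tilde q$ cannot collapse entirely into a single image'' is false for minimising chords. Consider $M=S^1\times S^2$ with the product round metric and $p=q=(1,x)$. The infinite cyclic cover is $\R\times S^2$, and the minimiser in the class $h^m$ simply winds $m$ times around the $S^1$-factor through $x$; every $\gamma_m$ has the same image $S^1\times\{x\}$. Here $M\not\simeq S^1$, yet \emph{all} the minimisers coincide geometrically. Local injectivity radius near $q$ only bounds $\length(c)$ from below; it gives no upper bound on the number of iterates $k$ for which $d\cdot c^k$ can appear among the $\gamma_m$, so no recursion of the form $L_{k+1}\leq L_k^4$ follows from your ingredients.

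What the paper does instead is essential: it replaces the minimisers $\gamma_m$ by min-max critical points $\delta_m$ detected by a nonzero class in $\pi_{n-1}(\Omega_q)\simeq\pi_n(M)$ (this is where $M\not\simeq S^1$ enters, splitting into the $K(\pi_1,1)$ case treated by Lemma~\ref{lem:EMcL} and the case $\pi_n(M)\neq 0$). Each $\delta_m$ is not a local minimum and has Morse index $\leq n-1$. If several $\delta_{m_i}$ carry the same primitive chord $d$, say $\delta_{m_i}=d\cdot c^{k_i}$, then index superadditivity forces $k_i<n(\max(k_1,k_2)+1)$; combined with the linear upper bound on $\length(\delta_m)$ and a lower bound on $\length(c)$ near $q$, this yields the quadratic recursion $m_i\leq P(m_2)$ of Lemma~\ref{lem:quadratic}, hence $A_{aN^2}\geq A_N+1$ and the $\frac{\log\log\ell}{2\log 2}$ bound. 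The index constraint is precisely what blocks the $S^1\times S^2$ degeneration, and it is unavailable for pure minimisers.
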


When the universal cover of $M$ is not contractible
(that is $M$ is not an Eilenberg-MacLane space),
Theorems~\ref{thm:infinite}, \ref{thm:log} and \ref{thm:loglog}
are deduced from a min-max argument
inspired by Bangert-Hingston \cite{BH}.
When $M$ has a contractible universal cover,
the estimate is even stronger, since the growth is at least linear:

\begin{lem}\label{lem:EMcL}
    Let $M$ be a manifold not homotopy-equivalent to $S^1$
    and with a contractible universal cover.
    Then, $\pi_1(M)$ has at least a quadratic growth rate.
\end{lem}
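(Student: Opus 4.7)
The plan is to reduce the statement to a classical gap theorem in geometric group theory. Write $\pi := \pi_1(M)$, which we may take to be infinite and finitely generated (the applications in the paper force both). The contractibility of the universal cover makes $M$ into a finite-dimensional classifying space $K(\pi,1)$, so the cohomological dimension of $\pi$ over $\Z$ is at most $\dim M$. As any non-trivial finite subgroup would give infinite cohomological dimension, $\pi$ must be torsion-free.

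Next I would exclude the possibility that $\pi$ is virtually infinite cyclic. An infinite virtually cyclic group sits in a short exact sequence $1 \to \Z \to \pi \to F \to 1$ with $F$ finite, and an inspection of the possible extensions, using torsion-freeness of $\pi$, forces $F$ to be trivial and therefore $\pi \cong \Z$. But then $M \simeq K(\Z,1) \simeq S^1$, contradicting the hypothesis. Hence $\pi$ is neither finite nor virtually infinite cyclic. To conclude, I would invoke the gap theorem of van den Dries--Wilkie (or, equivalently, Gromov's polynomial growth theorem combined with the Bass--Guivarc'h integer degree formula): a finitely generated group whose growth is $o(n^2)$ is automatically finite or virtually infinite cyclic. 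Contrapositively, $\pi$ must have growth at least $C n^2$ for some $C>0$, which is the asserted quadratic lower bound.

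The main obstacle is really just citing the correct form of the gap theorem, together with the minor annoyance that for non-compact $M$ the group $\pi_1(M)$ need not be finitely generated. In that case one would extract a finitely generated subgroup to which the above argument applies; this is harmless because Lemma~\ref{lem:EMcL} ultimately feeds into Proposition~\ref{prop:classicGrowth}, whose proof adapts readily as soon as one knows some finitely generated subgroup of $\pi_1(M)$ has polynomial growth of degree at least two. Beyond this bookkeeping, the proof is elementary once the three ingredients are in place: torsion-freeness from finite cohomological dimension, the characterisation of $S^1$ as the unique aspherical space with $\pi_1 = \Z$, and the gap theorem for group growth.
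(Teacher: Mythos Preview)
Your approach is essentially the paper's: deduce torsion-freeness from $M$ being a finite-dimensional $K(\pi,1)$ (the paper cites Smith's theorem for this), invoke the growth gap theorem (the paper cites Gromov, van den Dries--Wilkie, and Mann~\cite{Ma07} for the possibly infinitely generated case, rather than passing to a subgroup as you suggest), and conclude that a torsion-free virtually-$\Z$ group is $\Z$, whence $M\simeq K(\Z,1)\simeq S^1$.

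One slip worth flagging: in the sequence $1\to\Z\to\pi\to F\to 1$ the finite quotient $F$ need \emph{not} be trivial even when $\pi$ is torsion-free (witness $0\to 2\Z\to\Z\to\Z/2\to 0$). What is true is that $\pi\cong\Z$ regardless. The paper proves this as a separate lemma via Schur's theorem: a finite-index center forces the commutator subgroup to be finite, hence trivial by torsion-freeness, so $\pi$ is abelian, torsion-free, finitely generated and virtually $\Z$, i.e.\ $\pi\cong\Z$. Alternatively, if you write the sequence the other way around---every infinite virtually cyclic group fits in $1\to N\to\pi\to Q\to 1$ with $N$ finite and $Q\in\{\Z,D_\infty\}$---then your ``inspection'' works verbatim: torsion-freeness kills $N$ and rules out $D_\infty$.
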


We notice that any closed manifold of dimension $\geq 2$ with 
a contractible universal cover satisfies the above condition.

Investigations on the links between the number of geodesics joining two points
and the manifold topology 
go back to Morse seminal works \cite{Mor32, Mor37}, where
he proved that any couple of points of a closed Riemannian manifold $M$ can be joined
by infinitely many geodesics provided that the homology groups
of the loop space of $M$ have a non trivial rank in infinitely
many degrees.
Serre proved that this assumption is always satisfied for simply connected $M$
by studying the spectral sequence associated to
the fibration $\textup{ev}:P\to M$, where $P$ is the space of
paths $\gamma\in C^0([0,1],M)$ such that $\gamma(0)=p$ is
a fixed base point and $\textup{ev}(\gamma):=\gamma(1)$
\cite[Prop. IV.11]{Ser51}.
In the above result, geodesics are not necessarily geometrically
distinct.
Proving that there are infinitely many geometrically distinct geodesics
with Morse theory is more subtle.
Inspired by Gromoll-Meyer (see below), Tanaka
\cite[Problem~C]{Tan82} asked if it is enough for
a simply connected Riemannian manifold $(M,g)$ to assume
that the sequence of Betti numbers of the loop space $(\beta_i(\Omega M))$
on some field is unbounded to get that $\CLd{\ell}\to\infty$
for any pair of points $p,q\in M$.
When $p$ and $q$ are non-conjugate, he sketched the proof
and Caponio-Javaloyes \cite{CJ13} gave a detailed proof
in the more general case of a connected, forward and backward complete
Finsler manifold.
When $p$ and $q$ are conjugate, it is still an open problem.

For the related problem of closed geodesics in Riemannian manifold,
one of the first results in that direction was due to Gromoll-Meyer \cite{GM69a}:
if the sequence of the Betti numbers of the free loop space
of a simply connected  Riemannian manifold $M$ is unbounded, then
there are infinitely many geometrically distinct closed geodesics
on $M$.
As for the growth rate, Bangert and Hingston proved that it is
at least like the one of prime numbers (\emph{i.e.}  $\gtrsim \frac{\ell}{\log\ell}$)
up to a multiplicative and an additive constant
when $\pi_1(M)$ is infinite \emph{abelian} \cite{BH},
or when $M=\sphere{2}$ \cite{Hin93}.
The obstruction for us to find such a better growth
seems to come from the lack of iteration map
$\gamma\mapsto(t\mapsto\gamma(mt))$, $m\in\N^*$, in the space
of geodesic chords joining $p$ and $q$.
This result was extended by Ta\u{\i}manov
to a large class of infinite non-abelian fundamental groups in \cite{Tai93}.
Nevertheless, the existence of infinitely many geometrically distinct closed geodesics
in closed Riemannian manifolds with a general infinite fundamental group
is still an open problem.

\subsection*{Organisation of the paper}
In Section~\ref{se:prelim}, we fix the notation and the conventions
on the objects that we will use throughout the paper, and
we briefly recall the variational theory of geodesics for a Finsler manifold.
In Section~\ref{se:growth}, we give a proof of Proposition~\ref{prop:classicGrowth}
and Lemma~\ref{lem:EMcL}.
In Section~\ref{se:noEMcL},
we prove Theorems~\ref{thm:infinite}, \ref{thm:log} and \ref{thm:loglog}.

\subsection*{Acknowledgments}
I am very grateful to my advisor Marco Mazzucchelli
who introduced me to min-max techniques and their
relation to geodesics.

\section{Preliminaries}\label{se:prelim}

\subsection{Definitions and conventions on path spaces}\label{se:def}

Let $M$ be a connected manifold (every manifold will be assumed to be connected).
We fix, once for all, an auxiliary complete Riemannian metric $g_0$ on $M$.
By $H^1$-path, we mean an absolutely continuous function
$\gamma:[0,1]\to M$ such that the integral
$\int_0^1 g_0(\gamma',\gamma')\ud t$ is finite.
For $p,q\in M$ let $\Omega_{p,q}$ be the set of $H^1$-paths
$\gamma : [0,1]\to M$ with end-points $\gamma(0)=p$ and $\gamma(1)=q$
and $\Omega_p := \Omega_{p,p}$.
For $\gamma,\delta\in\Omega_{p,q}$, we write $\gamma\heq\delta$
if $\gamma$ and $\delta$ belong to the same path-connected component of $\Omega_{p,q}$.
For $\gamma\in\Omega_{p,q}$ and $\delta\in\Omega_{q,r}$,
we denote by $\gamma\cdot\delta\in\Omega_{p,r}$ the chained path
$t\mapsto \gamma(2t)$ for $t\in[0,1/2]$ and $t\mapsto\delta(2t-1)$
for $t\in[1/2,1]$.
We denote $a\cdot b\cdot c = (a\cdot b)\cdot c$ so that
$a\cdot b\cdot c\heq a\cdot(b\cdot c)$.
For $\gamma\in\Omega_{p,q}$, let $\gamma^{-1}\in\Omega_{q,p}$ be 
the reversed path $t\mapsto\gamma(1-t)$, so that
$\gamma\cdot\gamma^{-1}\heq \bar{p}$ where $\bar{p}\in\Omega_{p}$
denotes the constant path.
If $\gamma\in\Omega_q$ for some $q\in M$, $[\gamma]_{\pi_1}\in\pi_1(M,q)$
denotes its class in the fundamental group,
or simply $[\gamma]$ if there is no ambiguity on notation.

Let $p,q,p',q'\in M$, $\alpha\in\Omega_{p,p'}$ and
$\beta\in\Omega_{q,q'}$, then
$f:\Omega_{p,q}\to\Omega_{p',q'}$ and $g:\Omega_{p',q'}\to\Omega_{p,q}$
defined by $f(\gamma)=\alpha^{-1}\cdot\gamma\cdot\beta$
and $g(\gamma)=\alpha\cdot\gamma\cdot\beta^{-1}$ are
homotopy inverses, thus $\Omega_{p,q}$ and $\Omega_{p',q'}$ are
homotopy-equivalent spaces.
For all $h\in\pi_1(M,q)$ let $\Omega^h_q$ be the path-connected component such that
for all $\gamma\in\Omega^h_q$, $[\gamma]=h$.
We fix an arbitrary $\alpha\in\Omega_{p,q}$ once for all and we define
$\Omega^h_{p,q}:=\{\gamma\in\Omega_{p,q}\ |\ [\alpha^{-1}\cdot\gamma]=h\}$.

\subsection{Background on Finsler geodesics}\label{se:Finsler}

Let us recall some basic notion from Finsler geometry.
For a general reference, see \cite{BCS}.

Let $M$ be a manifold, $TM$ be its tangent bundle
and $\pi:TM\to M$ be the base projection.
A continuous function $F:TM\to [0,+\infty)$
is a Finsler metric if
\begin{itemize}
    \item it is smooth on $TM\setminus 0$,
        where $0\subset TM$ denotes the $0$-section,
    \item it is fiberwise positively homogeneous of degree 1,
        \emph{i.e.} $F(\lambda v)=\lambda F(v)$
        for $v\in TM$ and $\lambda >0$,
    \item its square $F^2$ is fiberwise strongly convex,
        that is the fundamental tensor
        \begin{equation*}
            g_u(v,w) := \left.\frac{1}{2}
            \frac{\partial^2}{\partial t\partial s}F^2(
            u+tv+sw)\right|_{t=s=0},\quad
            \forall v,w\in T_{\pi(u)}M,
        \end{equation*}
        is positive definite for every $u\in TM\setminus 0$.
\end{itemize}
A Finsler metric $F$ on $M$ induces a length on
$\Omega_{p,q}$, given by
\begin{equation*}
    \length(\gamma) := \int_0^1 F(\gamma'(t))\ud t,\quad
    \forall \gamma\in\Omega_{p,q}.
\end{equation*}
and a (not necessarily symmetric) distance $d$ on $M$, given by:
\begin{equation*}
    d(p,q) := \inf_{\gamma\in\Omega_{p,q}} \length(\gamma),\quad
    \forall p,q\in M.
\end{equation*}
Since $F(-v) = F(v)$ does not necessarily hold,
$d$ is not necessarily symmetric.
A sequence $(x_i)$ in $M$ is called a forward Cauchy sequence
if, for all $\varepsilon>0$, there exists a positive integer $N$
such that $N\leq i < j$ implies $d(x_i,x_j)<\varepsilon$.
The Finsler manifold $(M,F)$ is said to be \emph{forward complete}
if every forward Cauchy sequence converges in $M$.

Similarly to the Riemannian case, where $F$ is simply the
associated Riemannian norm, \emph{geodesics}
are the curves whose small portions are length minimizing. 
Moreover, they satisfy a
differential equation inducing an exponential map
between a neighborhood of $p\in M$ and a neighborhood of
$T_p M$.
If $F$ is forward complete, the Hopf-Rinow Theorem from Riemannian geometry
remains true in the Finsler setting:
the exponential map is onto
and, for all $p,q\in M$, there exists a geodesic in
$\Omega_{p,q}$ minimizing the length.

Throughout the paper, all the geodesics $\gamma$ will be considered parametrized
with constant speed equal to $F(\gamma')$
and we will often identify geo\-de\-sics and reparametrized geodesics
when writing ``$\delta\cdot\gamma$ is a geodesic''.
For $p,q\in M$, geodesics in $\Omega_{p,q}$ are then exactly the critical points
of the energy functional:
\begin{equation*}
    E(\gamma) := \int_0^1 F^2(\gamma'(t))\ud t,\quad
    \forall \gamma\in\Omega_{p,q}.
\end{equation*}
If the Finsler metric is forward complete, then $E:\Omega_{p,q}\to[0,+\infty)$ 
satisfies the Palais-Smale condition (see for example \cite[Section~3]{CJM}).
Given $\gamma\in\Omega_{p,q}$ critical, we denote by
$\ind(\gamma)$ its index,
which is the non-negative integer computed in the same way as in the Riemannian case
using Jacobi fields.
The index of a geodesic chord shares properties similar to the Riemannian case.
In particular, for two geodesic chords $\gamma\in\Omega_{p,q}$ and 
$\delta\in\Omega_{q,r}$:
\begin{enumerate}[(i)]
    \item \label{it:nullind} if $\ind(\gamma)=0$,
        then for $s\in(0,1)$, $\gamma|_{[0,s]}$
        reparametrized by constant speed on $[0,1]$
        is a local minimum of $E:\Omega_{p,\gamma(s)}\to\R$,
    \item \label{it:addind} If $\gamma\cdot\delta$ is a geodesic, then
        $\ind(\gamma\cdot\delta)\geq \ind(\gamma)+\ind(\delta)$.
\end{enumerate}
However, $E$ is only of class $C^{1,1}$ in general and 
it can be very technical to make this functional
fit into the Morse \emph{apparatus}.
To overcome this issue, we can retract $\{ E < \lambda \}$
to a finite dimensional subspace $B$ of broken geodesics joining $p$ and $q$.
We briefly recall the construction of $B\subset\{ E<\lambda\}$
and its retraction $(r_s)$
(a comprehensive reference for the Riemannian case
is \cite[Part~III, \S16]{Mil63}).
Let $k\geq 1$ be large so that for all $\gamma\in\{ E<\lambda\}$
there exists a unique minimizing geodesic joining
$\gamma(s)$ and $\gamma(t)$ for $|s-t|\leq 1/k$.
Let 
\begin{equation*}
B :=\left\{ \gamma\in\{ E <\lambda\}\ |\ 
\gamma|_{[i/k,(i+1)/k]} \text{ is a geodesic}, 0\leq i\leq k-1 \right\}
\end{equation*}
be the subspace of $k$-broken geodesics $\subset\{ E<\lambda\}$.
It is a finite dimensional manifold since it is diffeomorphic
to an open subset of the $(k-1)$-fold product $M\times\cdots\times M$
\emph{via} $\gamma\mapsto (\gamma(1/k),\ldots,\gamma((1-k)/k))$.
The retraction homotopy $r_s : \{ E <\lambda \}\to
\{ E <\lambda \}$, 
with $r_0 =\id$ and $\Im(r_1)=B$, is defined as follows.
For each $\gamma\in\{ E<\lambda\}$, 
$r_{s}(\gamma)$ coincides with $\gamma$ everywhere except
on intervals of the form $[i/k,(i+s)/k]$, $0\leq i\leq k-1$,
and the restrictions of $r_s(\gamma)$ to such intervals are
minimizing geodesics.
This retraction has the following properties:
\begin{enumerate}[(a)]
    \item\label{item:dec}
        $\forall s\in[0,1]$, $E\circ r_s \leq E$,
    \item if $\gamma\in\{ E<\lambda\}$ is a geodesic,
        $r_s (\gamma) \equiv \gamma$,
    \item\label{item:crit}
        critical points of $E|_B$ are exactly the critical points
        of $E|_{\{E<\lambda\}}$,
        $E|_B$ is smooth in their neighborhood
        and their Morse index is equal to their index defined
        with Jacobi fields.
\end{enumerate}

\section{growth rate of geodesic chords and growth rate of $\pi_1(M)$}\label{se:growth}

Throughout this section, $M$ is a forward complete Finsler manifold.

\subsection{Growth rate of geodesic chords and growth of the fundamental group}

We suppose that $\pi_1(M,q)$ is a finitely generated group
and denote by $e\in\pi_1(M,q)$ its neutral element.
Let $S\subset\pi_1(M,q)$ be a finite set of generators.
We recall that the word length of $g\in\pi_1(M,q)$ associated to $S$ is
\begin{equation*}
    |g| := \min\left\{ m\in\N\ |\
    \exists g_1,\ldots, g_m\in S\cup S^{-1}\cup\{e\},
    \ g=g_1\cdots g_m\right\}\in\N
\end{equation*}
and we denote the associated ball of radius $r\in\N$ by $B_r := \{g\in\pi_1\ |\ |g|\leq r\}$.
We will say that $\pi_1(M,q)$ has at least a polynomial growth rate of degree $d>0$
if there exists some $a>0$ such that $\# B_r \geq ar^d$.
This notion is indeed independent of the choice of $S$.

For $h\in\pi_1(M,q)$, we fix an arbitrary $\gamma_h\in\Omega^h_{p,q}$ minimizing the length,
it gives us a family of homotopically (but not geometrically) distinct
geodesics $(\gamma_h)_{h\in\pi_1}$
(where $\pi_1 = \pi_1(M,q)$ by a slight abuse of notation).

\begin{proof}[Proof of Proposition~\ref{prop:classicGrowth}]
    We suppose $\pi_1(M,q)$ has at least a polynomial growth rate of degree $d>0$.
    We take a finite generating part $S:=\{s_1,\ldots,s_n\}$, which is
    symmetric: $S=S^{-1}$ and contains the neutral element,
    and define the balls $B_r\subset X_S$ as above. 
    Let $c_1,\ldots,c_n\in\Omega_q$ be such that $[c_i]=s_i$
    and $c_i$ is minimizing length in its homotopy class.
    We will first give a lower bound on the counting number
    $\CL{\ell}$ of geodesics between $p$ and $q$ not necessarily geometrically
    distinct.

    Given $r\in\N$, let $g\in B_r$. There exist $i_1,\ldots,i_r\in\{ 1,\ldots, n\}$
    such that $g=s_{i_1}\cdots s_{i_r}$, so that
    $[\alpha^{-1}\cdot\gamma_g]=[c_{i_1}\cdots c_{i_r}]$
    (recall that $\alpha\in\Omega_{p,q}$).
    Since $\gamma_g$ is minimizing length in its homotopy class,
    \begin{equation*}
        \length(\gamma_g)\leq \length(\alpha\cdot c_{i_1}\cdots c_{i_r})
        \leq \length(\alpha) + r\max(\length(c_j)).
    \end{equation*}
    Therefore, since $(\gamma_g)_{g\in B_r}$ is a family of distinct
    geodesics, there exists $a>0$ depending only on the growth rate of $\pi_1(M,q)$
    such that
    \begin{equation*}
         \CL{\ell}\geq 
        a\left(\frac{\ell -\length(\alpha)}{\max(\length(c_i))}\right)^d,
        \quad\forall\ell >0.
    \end{equation*}

    We remark that there exists some positive number $b(p)>0$ 
    depending only on the Finsler metric on $M$ such that
    any $k$-iterate closed geodesic containing $p$ has length $\geq b(p)k$
    (one can take $b(p)$ to be twice the injectivity radius at $p$).
    Since a geodesic in $\Omega_{p,q}$ whose image appears
    multiple times can be uniquely written $d\cdot c^k$ 
    with a primitive closed geodesic $c\in\Omega_q$
    and a specific choice of geodesic chord $d\in\Omega_{p,q}$
    (look at the definition of a primitive geodesic chord
    in Section~\ref{se:minmax} for the precise statement),
    \begin{equation*}
         \CLd{\ell}\geq
        \frac{b}{2\ell}\CL{\ell} \geq a'\ell^{d-1} + b',\quad
        \forall\ell >0,
    \end{equation*}
    where $a'>0$ and $b'\in\R$ depend only on the metric and
    the growth rate of $\pi_1(M)$ and can be made continuous in $p\in M$.
\end{proof}

\subsection{Growth rate of the fundamental group of $K(\pi_1,1)$ closed manifolds}

Let $M$ be a $K(\pi_1,1)$ manifold with an infinite fundamental group
and a contractible universal cover.
According to Smith's theorem (see for instance
\cite[Theorem~16.1, page 287]{Hu59}), $\pi_1(M)$ is torsion-free.
We suppose that every finitely generated subgroup of 
$\pi_1(M)$ grows strictly less than any quadratic polynomial.
Then according to a deep result of Gromov \cite{Gro81}, $\pi_1(M)$ must
be virtually isomorphic to $\Z$:
that is $\pi_1(M)$ has a subgroup of finite index which is
isomorphic to $\Z$.
Alternatively, the reader can find 
an elementary proof of this statement in the
finitely generated case in \cite{WvdD}.
For a precise proof of the case where $\pi_1(M)$ is \emph{a priori}
infinitely generated, see \cite[Theorem~2]{Ma07}
(with notations of this theorem, since $G$ is torsion-free,
$L$ is trivial and $G$ is virtually $N\simeq\Z$).

The following algebraic lemma is certainly well known,
but we add its proof here for the reader's convenience,
as we could not find it in the literature.

\begin{lem}\label{lem:virtualZ}
    Let $G$ be a torsion-free group,
    if $G$ is virtually isomorphic to $\Z$
    (\emph{i.e.} there exists a subgroup of finite index
    $H<G$ which is isomorphic to $\Z$)
    then $G\simeq \Z$.
\end{lem}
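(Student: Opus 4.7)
The plan is to replace $H$ by its normal core and then analyze the resulting short exact sequence. Set
\[
N := \bigcap_{g \in G} gHg^{-1}.
\]
Since $[G:H]<\infty$, there are only finitely many conjugates of $H$, so $N$ is a finite intersection of finite-index subgroups of $G$, hence itself has finite index in $G$. As $N \leq H \simeq \Z$ and $N$ has finite index in the infinite cyclic group $H$, one concludes that $N$ is infinite cyclic and normal in $G$.

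Next, I would consider the conjugation action of $G$ on $N$, which produces a homomorphism $\varphi : G \to \mathrm{Aut}(N) = \{\pm 1\}$. Let $K := \ker\varphi$, so that $[G:K] \leq 2$ and $N \leq Z(K)$. Since $K/N$ is a quotient of the finite group $G/N$, and $N$ is central in $K$, the quotient $K/Z(K)$ is finite. By Schur's theorem, $[K,K]$ is then finite; being a subgroup of the torsion-free group $G$, it must be trivial. Hence $K$ is abelian. As $K$ is finitely generated, torsion-free abelian, and contains $\Z$ with finite index, $K \simeq \Z$.

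Finally, I would rule out the case $[G:K]=2$. If some $g \in G \setminus K$ existed, then $g^2 \in K$ and $g$ would act on $K \simeq \Z$ by inversion. Writing $g^2 = k^m$ for a generator $k$ of $K$, the identity $g\cdot g^2 \cdot g^{-1} = g^2$ combined with $gkg^{-1}= k^{-1}$ would give $k^m = k^{-m}$, whence $m=0$ and $g^2 = e$. Torsion-freeness of $G$ then forces $g=e$, contradicting $g \notin K$. Therefore $G = K \simeq \Z$.

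The step I expect to be the main subtlety is proving that $K$ is abelian: even though $N$ is central and of finite index in $K$, this conclusion is not purely formal and seems to require a finiteness-of-commutators input such as Schur's theorem, combined \emph{crucially} with the torsion-freeness hypothesis to promote ``$[K,K]$ finite'' to ``$[K,K]$ trivial''. Everything else reduces to routine manipulation of short exact sequences, the classification of finitely generated abelian groups, and the structure of $\mathrm{Aut}(\Z)$.
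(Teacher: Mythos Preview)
Your proof is correct. The paper's argument is closely related but organized differently: rather than passing to the normal core and then handling a possible index-$2$ extension at the end, the paper shows directly that the center $Z(G)$ itself has finite index in $G$. Fixing a finite generating set $\{s_1,\dots,s_n\}$ of $G$, it uses a pigeonhole argument on the cosets of $H$ to show that each centralizer $C(s_i)$ meets $H$ nontrivially; since nontrivial subgroups of $H\simeq\Z$ automatically have finite index in $H$, the center $Z(G)=\bigcap_i C(s_i)$ then has finite index in $G$. Schur's theorem is applied to $G$ in one shot, giving $G$ abelian, and the classification of finitely generated torsion-free abelian groups finishes. Both proofs rest on the same pivot --- Schur's theorem combined with torsion-freeness to kill the commutator subgroup --- but the paper's centralizer argument buys a single-step conclusion, whereas your normal-core route trades that for a more structural setup at the price of the extra index-$2$ elimination. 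One small omission in your write-up: you invoke ``$K$ is finitely generated'' without comment; this holds because $G$ is finitely generated (it contains the finitely generated subgroup $H$ with finite index) and finite-index subgroups inherit finite generation.
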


\begin{proof}
    Since $G$ is virtually isomorphic to a finitely generated group,
    $G$ is a finitely generated group.
    Let $S=\{s_1,\ldots,s_n\}$ be a set of generators of $G$
    and let $H<G$ be a subgroup of finite index isomorphic to $\Z$.
    If $C(s):=\{ g\in G\ |\ gs = sg\}$ denotes the centralizer of
    $s\in G$, then $C(s)\cap H$ is not trivial.
    Indeed, $C(s)$ is infinite (it contains $\la s\ra$ which
    is infinite by hypothesis for $s\neq e$ and $C(e)=G$)
    and there exists a finite sequence $(g_i)$ in $G$
    such that $G=\bigcup_{i}g_i H$, thus
    there exists some $g=g_i$ such that $C(s)\cap gH$ is infinite.
    Let $c,c'\in C(s)\cap gH$ be distinct. There are $h,h'\in H$
    such that $c=gh$ and $c'=gh'$, then
    $c^{-1}c' = h^{-1}h' \neq 1$ is in $C(s)\cap H$.

    Since a finite intersection of non-trivial subgroups of $H\simeq\Z$ has
    a finite index, $\bigcap_{i}C(s_i)\cap H$ has a finite index in $H$.
    Thus the centralizer $Z=\bigcap_{i}C(s_i)$ of $G$ has a
    finite index in $G$.
    According to a theorem of Schur, 
    it implies that the commutator subgroup $D:=[G,G]$
    is finite \cite[Theorem~5.32]{Rot95}.
    Since $G$ is torsion-free, $D$ is trivial and $G$ is abelian.
    An abelian torsion-free finitely generated group is isomorphic
    to $\Z^r$ and $\Z$ is the only one virtually isomorphic to $\Z$.
\end{proof}

\begin{proof}[Proof of Lemma~\ref{lem:EMcL}]
    Let $M$ be a $K(\pi_1,1)$ manifold.
    We assume that $\pi_1(M)$ grows less than a quadratic polynomial,
    so that it is virtually isomorphic to $\Z$.
    Since $\pi_1(M)$ is torsion-free,
    Lemma~\ref{lem:virtualZ} implies that $\pi_1(M)\simeq\Z$.
    Since $M$ is a $K(\Z,1)$ manifold, it is homotopy-equivalent
    to $S^1$.
\end{proof}

\section{Growth rate when the universal cover is not contractible}\label{se:noEMcL}

Throughout this section, $M$ is a forward complete Finsler manifold
with an infinite fundamental group and a non-contractible universal cover.

\subsection{A sequence of min-max geodesics}\label{se:minmax}

Since the universal cover of $M$ is not contractible, there is some $n>1$ such
that $\pi_n(M,q)\neq 0$, we fix such an $n>1$.
For all $h\in\pi_1(M,q)$, we recall that
$\gamma_h\in\Omega^h_{p,q}$ denotes a minimizing geodesic.
We take a non-zero class $\nu\in\pi_{n-1} (\Omega_q,\bar{q})\simeq\pi_n (M,q)$ and,
for all $h\in\pi_1(M,q)$,
let $\nu_h:=(\gamma_h)_* \nu$ be the induced non-zero class
of $\pi_{n-1}(\Omega^h_{p,q},\gamma_h)$.

To be more precise on the definition of $\nu_h$,
    let $x_0\in\sphere{n-1}$ be the base point and
    $d$ be the round distance on $\sphere{n-1}$.
    Let $f:(\sphere{n-1},x_0)\to (\Omega_q,\bar{q})$
    be a smooth function in the class $\nu$
    such that $f(s)=\bar{q}$ for all $s\in\sphere{n-1}$
    such that $d(x_0,s)<1$.
    For $h\in\pi_1(M,q)$,
    let $f_h:(\sphere{n-1},x_0)\to (\Omega^h_{p,q},\gamma_h)$ be essentially defined by
    $f_h(s):=\gamma_h\cdot f(s)$ for all $s\in\sphere{n-1}$.
    To be more specific, in order to have $f_h(x_0)=\gamma_h$,
    let $f_h(s) := \gamma_h\cdot f(s)$ for all $s\in\sphere{n-1}$
    such that $d(x_0,s)\geq 1$ and
    \begin{equation}\label{eq:fh}
        \forall t\in [0,1],\quad
        f_h(s)(t) :=
        \left\{\begin{array}{l l}
                \gamma_h(t/\lambda(s)),&
                    \textup{ if } t\in [0,\lambda(s)],\\
                q, &
                \textup{ otherwise},
        \end{array}\right.
    \end{equation}
    for all $s\in\sphere{n-1}$ such that $d(x_0,s)<1$,
    with $\lambda(s) = 1-\frac{d(x_0,s)}{2}\in [1/2,1]$.
    Then we define $\nu_h$ as the homotopy class of $f_h$.

We suppose that $E:\Omega_{p,q}\to\R$ has a discrete set of critical points
(otherwise the conclusions of Theorems~\ref{thm:infinite},
\ref{thm:log} and \ref{thm:loglog} are clearly true)
and consider the min-max:
\begin{equation}\label{eq:minmax}
    \tau_h = \inf_{f\in\nu_h} \max_{s\in\sphere{n-1}} E(f(s)).
\end{equation}
Then $\tau_h$ is a critical value of $E$
and there exists a critical point $\delta_h\in\Omega_{p,q}$
of value $\tau_h$ which is not a local minimum
and satisfies $\ind(\delta_h)\leq n-1$.
This is a classical result if $E$ is $C^2$
in the neighborhood of its critical points and satisfies Palais-Smale
(see for instance \cite[Chapter~II]{Chang}).
Even though $E$ is not $C^2$ in any neighborhood of its critical points
for a general Finsler metric, we can apply a retraction
$(r_s)$ of $\{ E<\lambda\}$ for some $\lambda > \tau_h$
to a finite dimensional subspace $B$ of broken geodesics,
as explained above.
We thus see that $\tau_h$ satisfies (\ref{eq:minmax})
restricted to the finite dimensional subspace $B$,
according to property (\ref{item:dec}) of $(r_s)$.
Now property (\ref{item:crit}) allows us to find
$\delta_h\in\Omega_{p,q}$ among the critical points of $E|_B$
which is $C^2$ in the neighborhood of its critical points.

The following estimate will be useful in Section~\ref{se:betti}:

\begin{lem}\label{lem:tau}
    There exists a constant $C>0$ such that, for all $h\in\pi_1(M,q)$,
    \begin{equation*}
        \length(\gamma_h)^2 \leq \tau_h \leq 2\length(\gamma_h)^2 + C.
    \end{equation*}
\end{lem}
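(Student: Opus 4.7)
The plan is to prove the two inequalities separately: the lower bound comes from the variational characterization of $\tau_h$ together with the length‑minimizing property of $\gamma_h$ within its homotopy class, while the upper bound comes from plugging the explicit representative $f_h$ of $\nu_h$ constructed in~\eqref{eq:fh} into the definition of the min–max.

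For the lower bound, recall that $\tau_h$ is attained at a critical point $\delta_h\in\Omega_{p,q}$ of $E$. Since $\nu_h\in\pi_{n-1}(\Omega^h_{p,q},\gamma_h)$, every representative $f\in\nu_h$ takes values in the single path component $\Omega^h_{p,q}$, and because path components of $\Omega_{p,q}$ are open in the $H^1$ topology, the Palais–Smale limit $\delta_h$ also lies in $\Omega^h_{p,q}$. In particular $\delta_h$ is a geodesic chord homotopic to $\gamma_h$, parametrized at constant speed, so $E(\delta_h)=\length(\delta_h)^2$. Since $\gamma_h$ minimizes the length in $\Omega^h_{p,q}$, we get $\length(\gamma_h)\leq\length(\delta_h)$, hence $\length(\gamma_h)^2\leq \tau_h$.

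For the upper bound, I would estimate $\max_{s}E(f_h(s))$ directly from the explicit formula for $f_h$. Split $\sphere{n-1}$ into the region $U=\{d(x_0,s)<1\}$ and its complement. On $\sphere{n-1}\setminus U$, $f_h(s)=\gamma_h\cdot f(s)$ is a concatenation of two paths each originally defined on $[0,1]$; a direct change of variables gives $E(\gamma_h\cdot f(s))=2E(\gamma_h)+2E(f(s))$. Using that $\gamma_h$ is a minimizing geodesic, $E(\gamma_h)=\length(\gamma_h)^2$, and that $f\colon\sphere{n-1}\to\Omega_q$ is a fixed smooth map, so the quantity $C_0:=\sup_{s\in\sphere{n-1}}E(f(s))$ is a finite constant independent of $h$. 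On $U$, $f_h(s)$ is $\gamma_h$ reparametrized on $[0,\lambda(s)]$ and then constant at $q$, so a further change of variables yields $E(f_h(s))=\lambda(s)^{-1}E(\gamma_h)\leq 2\length(\gamma_h)^2$ since $\lambda(s)\geq 1/2$. Combining the two cases, $\max_s E(f_h(s))\leq 2\length(\gamma_h)^2+2C_0$, and using $f_h\in\nu_h$ in the definition~\eqref{eq:minmax} of $\tau_h$ gives $\tau_h\leq 2\length(\gamma_h)^2+C$ with $C:=2C_0$.

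There is no real obstacle; the only point needing a little care is the bookkeeping with reparametrizations when computing the energies, in particular the factor~$2$ produced by concatenating two paths that are each parametrized on $[0,1]$, which is exactly what forces the coefficient $2$ on $\length(\gamma_h)^2$ in the upper bound.
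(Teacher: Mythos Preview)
Your argument is correct and, for the upper bound, matches the paper's proof; you simply make the case split between $U=\{d(x_0,s)<1\}$ and its complement explicit, whereas the paper records the single inequality $E(f_h(s))\leq 2E(\gamma_h)+2E(f(s))$ valid for all $s$ (on $U$ this reduces to your estimate $\lambda(s)^{-1}E(\gamma_h)\leq 2E(\gamma_h)$ since $f(s)=\bar q$ there). For the lower bound the paper takes a slightly more elementary route than yours: because $\gamma_h$ is a constant-speed length minimizer in its component, it is a global minimum of $E$ on $\Omega^h_{p,q}$, so for every representative $f_h\in\nu_h$ and every $s$ one has $E(f_h(s))\geq E(\gamma_h)$; hence $\tau_h\geq \length(\gamma_h)^2$ follows straight from the definition~\eqref{eq:minmax}, without appealing to Palais--Smale or the existence of the critical point $\delta_h$.
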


\begin{proof}
    Given $f_h\in\nu_h$, for all $s\in\sphere{n-1}$,
    $f_h(s)\heq \gamma_h$ and $\gamma_h$ is minimizing length with a constant velocity,
    thus $E(\gamma_h)\leq E(f_h(s))$ which gives $\length(\gamma_h)^2\leq \tau_h$.

    Let $x_0\in\sphere{n-1}$ be the base point and
    $d$ be the round distance on $\sphere{n-1}$.
    Let $f:(\sphere{n-1},x_0)\to (\Omega_q,\bar{q})$
    be a smooth function in the class $\nu$
    such that $f(s)=\bar{q}$ for all $s\in\sphere{n-1}$
    such that $d(x_0,s)<1$
    and define $f_h\in\nu_h$ by (\ref{eq:fh}).
    Then, for all $s\in\sphere{n-1}$,
    \begin{equation*}
        E(f_h(s))\leq  2E(\gamma_h) + 2E(f(s)),
    \end{equation*}
    thus $\tau_h\leq 2E(\gamma_h) + C$ with $C:=2\max E\circ f$ independent of $h$.
\end{proof}

We recall that a geodesic loop $c\in\Omega_q$ is \emph{primitive}
if there does not exist any geodesic loop $c_0\in\Omega_q$
and any positive integer $k>1$ such that $c=c_0^k$.
We will say that a geodesic chord $d\in\Omega_{p,q}$
is \emph{primitive} if 
there does not exist any 
geodesic loop $c\in\Omega_q$ such that $\Im(d)=\Im(c)$
or if it is a primitive geodesic loop
(which is only possible in the case $p=q$).
For all geodesic chord $\beta\in\Omega_{p,q}$,
there exists a unique primitive geodesic chord $d\in\Omega_{p,q}$
such that $\beta = d\cdot c^k$,
where $c\in\Omega_q$ is either $\bar{q}$ or
the primitive geodesic loop containing $d$
and $k\in\N$.
We will say that the geodesic chord $\beta\in\Omega_{p,q}$
\emph{carries} the primitive chord $d\in\Omega_{p,q}$.
Thus if the family $(\delta_h)$ carry $m$ distinct primitive
chords $d_1,\ldots,d_m$ and $p\neq q$,
then $d_1,\ldots,d_m$ are geometrically distinct
geodesic chords joining $p$ and $q$.
In the special case $p=q$, 
it is possible that $d_r = d_s^{-1}$ for some $r\neq s$
so that at least $\lceil m/2\rceil$ of them are geometrically distinct.

We now study the number of times chords carrying 
the same primitive chord $d\in\Omega_{p,q}$
can appear in the infinite family $(\delta_h)_{h\in\pi_1}$.
Let $d\in\Omega_{p,q}$ be a primitive geodesic chord.
Let $(\delta'_i)_{1\leq i\leq N}:=(\delta_{h_i})$, $N\geq 2$, be a sequence included in
$(\delta_h)$, $h\in\pi_1$,
such that each $\delta'_i$ carries $d$.
Since $N\geq 2$, the primitive chord $d$ is included
in some primitive geodesic loop $c\in\Omega_q$ and
there exists a sequence of non-negative integers $(k_i)_{1\leq i\leq N}$
such that $\delta'_i = d\cdot c^{k_i}$.
Since each $\delta'_i$ belongs to a different path-connected
component $\Omega^{h_i}_{p,q}\subset\Omega_{p,q}$,
the $k_i$'s are distinct.

Now we remark that $\ind (\delta'_i) \geq 1$ for $i=1$ or $i=2$:
otherwise if one supposes $k_2>k_1$ then
$\delta'_1$ must be a local minimum of $E$,
according to property (\ref{it:nullind}) of Section~\ref{se:Finsler}.
If one supposes $k_2>k_1$,
then $\ind (d\cdot c^{k_2})\geq 1$ implies that
$\ind (d\cdot c^{n(k_2 +1)})\geq n$
(property (\ref{it:addind}) of Section~\ref{se:Finsler}) thus
$k_i < n(k_2 +1)$ for all $i$.

\begin{proof}[Proof of Theorem~\ref{thm:infinite}]
    The case of a contractible universal cover is a consequence
    of Lemma~\ref{lem:EMcL}.

    In our present setting, the theorem follows from the fact that a same
    primitive chord joining $p$ and $q$ can only be carried 
    a finite number of time
    in the infinite family $(\delta_h)_{h\in\pi_1}$.
    Indeed, let $(\delta'_i)_{1\leq i\leq N}$ be
    a sequence inside the family with $N\geq 2$ possibly infinite,
    each $\delta'_i$ carrying the same primitive chord
    and let $(k_i)$ be the associated injective sequence in $\N$.
    Since the $k_i$'s are distinct, $N\leq n(\max(k_1,k_2) +1)$.
\end{proof}

\subsection{Logarithmic growth when $\beta_1(M;\Z)\geq 1$}\label{se:betti}

Let $M$ be a forward complete Finsler manifold with first Betti number
$\beta_1(M;\Z)\geq 1$ and which is not $K(\pi_1,1)$.

Let $h\in\pi_1(M,q)$ be such that its image under the Hurewicz map
$\pi_1(M,q)\to H_1(M;\Z)$
is of infinite order (in particular the order of $h$ is also infinite).
Here, for $m\in\Z$, 
$\Omega^m_{p,q} := \{ \gamma\in\Omega_{p,q}\ |\ [\alpha^{-1}\cdot\gamma]_{\pi_1}=h^m\}$,
where $\alpha\in\Omega_{p,q}$ is fixed once for all.
Let $\gamma_m:=\gamma_{h^m}$ be a global minimizer of $E$ on $\Omega^m_{p,q}$
and $\delta_m := \delta_{h^m}$.

\begin{lem}\label{lem:lbound}
    If $M$ is closed,
    there are $a,a'>0$ and $b,b'\in\R$ such that
    \begin{equation*}
        \forall m\in\N,\quad
        am+b\leq \length(\delta_m) \leq a'm+b'.
    \end{equation*}
    The inequality $\length(\delta_m)\leq a'm+b'$ still holds 
    when $M$ is not closed.
\end{lem}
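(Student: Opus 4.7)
The plan is to reduce bounding $\length(\delta_m)$ to bounding $\length(\gamma_m)$, and then to obtain linear upper and lower bounds on $\length(\gamma_m)$ by combining an explicit competitor with a cohomological lower bound. Applying Lemma~\ref{lem:tau} to $h^m$, and using that the geodesics $\gamma_m$ and $\delta_m$ are parametrized by constant speed (so $E=\length^2$), one has
\begin{equation*}
    \length(\gamma_m)^2 \leq \length(\delta_m)^2 = \tau_{h^m} \leq 2\length(\gamma_m)^2 + C,
\end{equation*}
so the problem reduces to showing $am+b\leq \length(\gamma_m)\leq a'm+b'$, with only the upper half required when $M$ is merely forward complete.

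For the upper bound, I would fix once and for all a piecewise smooth loop $c\in\Omega_q$ representing $h$. Then $\alpha\cdot c^m\in\Omega^m_{p,q}$, and since $\gamma_m$ minimizes length in its homotopy class,
\begin{equation*}
    \length(\gamma_m)\leq \length(\alpha\cdot c^m)\leq \length(\alpha) + m\length(c),
\end{equation*}
which is linear in $m$. This step uses no compactness hypothesis and thus yields the upper bound in the general case.

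For the lower bound, I would invoke the hypothesis that the Hurewicz image of $h$ in $H_1(M;\Z)$ has infinite order; passing to $H_1(M;\R)$, this image is still nonzero, so by de Rham duality there exists a smooth closed $1$-form $\omega$ on $M$ with $\int_c\omega\neq 0$. For any $\gamma\in\Omega^m_{p,q}$, the class $[\alpha^{-1}\cdot\gamma]=h^m$ in $\pi_1$, hence in $H_1$, gives
\begin{equation*}
    \int_\gamma\omega = \int_\alpha\omega + m\int_c\omega.
\end{equation*}
Because $M$ is closed, the unit $g_0$-sphere bundle is compact, so $F$ and the $g_0$-norm are globally equivalent, and $\omega$ is $g_0$-bounded; both facts combine to give a constant $K>0$ with $|\int_\gamma\omega|\leq K\length(\gamma)$ for every $\gamma$. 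Applying this to $\gamma_m$ produces $\length(\gamma_m)\geq \bigl(m|\int_c\omega|-|\int_\alpha\omega|\bigr)/K$, which is linear in $m$ for large $m$, and the constant $b$ can be lowered to absorb the finitely many small values.

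The main obstacle, and the only place compactness of $M$ enters, is the lower bound: one must simultaneously control $\omega$ uniformly (giving a globally bounded detector of $h$ in $H_1(M;\R)$) and compare the Finsler length with the auxiliary Riemannian length with constants independent of the curve. Both ingredients are immediate for closed $M$ but can fail otherwise, which explains the restriction in the statement.
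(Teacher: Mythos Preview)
Your proposal is correct and follows essentially the same approach as the paper: reduce via Lemma~\ref{lem:tau} to bounding $\length(\gamma_m)$, get the upper bound from the competitor $\alpha\cdot c^m$, and get the lower bound by pairing with a closed $1$-form detecting $[h]\in H_1(M;\R)$, invoking compactness only for the uniform bound $\lvert\int_\gamma\omega\rvert\leq K\length(\gamma)$. The paper phrases the lower bound via the stable norm and integrates over $\alpha^{-1}\cdot\gamma_m$ rather than splitting $\int_{\gamma_m}\omega$, but this is the same computation.
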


\begin{proof}
    According to Lemma~\ref{lem:tau},
    it suffices to prove these inequalities for $\length(\gamma_m)$
    instead of $\length(\delta_m) = \sqrt{\tau_{h^m}}$.

    Let $c := \alpha^{-1}\cdot\gamma_1\in\Omega_q$,
    then $[c^m]= [\alpha^{-1}\cdot\gamma_1]^m = h^m = [\alpha^{-1}\cdot \gamma_m]$
    so $\alpha\cdot c^m \in\Omega^m_{p,q}$ and, since $\gamma_m$
    is minimizing the length,
    $\length(\gamma_m)\leq \length(c^m)+\length(\alpha)
    = m\length(c)+\length(\alpha)$.

    For the lower bound, it comes from the fact
    that $\length(\alpha^{-1}\cdot\gamma_m)\geq m\|[h]\|_{s}$,
    where $\| [h]\|_{s} >0$ is the stable norm of
    $[h]\in H_1(M,\R)$.
    To show that directly, one can take a 1-form $\omega$ such that
    $\la\omega,[h]\ra\neq 0$ (it exists since $[h]\neq 0$
    on $H_1(M,\R)$ by hypothesis) and remark that
    \begin{equation*}
        m|\la\omega,[h]\ra| =
        |\la\omega,[h^m]\ra| = \left| \int_{\alpha^{-1}\cdot\gamma_m}\omega\right|
        \leq \left(\sup_{x\in M} \|\omega_x\|\right) \length(\alpha^{-1}\cdot\gamma_m).
    \end{equation*}
    Since $M$ is closed, $\sup_{x\in M} \|\omega_x\|$ is finite.
\end{proof}

Thanks to the estimate on $\length(\delta_m)$ given by Lemma~\ref{lem:lbound},
we can be more specific on the number of times a same primitive chord
can be carried in $(\delta_m)$:

\begin{lem}\label{lem:linear}
    If $M$ is closed,
    there exist some $a,b>0$ such that
    if $\delta_{m_1}$ and $\delta_{m_2}$ 
    carry the same primitive chord $d$ for some $m_1<m_2$,
    then for all $m>am_2 +b$ the chord
    $\delta_m$ does not carry $d$.
\end{lem}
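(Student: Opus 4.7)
My plan is to combine the iteration-index bound from the end of Section~\ref{se:minmax} with the two-sided length control of Lemma~\ref{lem:lbound}. Let $a_0,a_0',b_0,b_0'$ denote the constants from that lemma, so that $a_0 m+b_0\leq \length(\delta_m)\leq a_0' m+b_0'$ for every $m$.

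First I would set up notation: assume $\delta_{m_1}$ and $\delta_{m_2}$ carry $d$ with $m_1<m_2$. By the discussion preceding the proof of Theorem~\ref{thm:infinite}, $d$ sits inside a primitive geodesic loop $c\in\Omega_q$ and $\delta_{m_i}=d\cdot c^{k_{m_i}}$ for distinct integers $k_{m_1},k_{m_2}\geq 0$. Since $\length(\delta_{m_i})=\length(d)+k_{m_i}\length(c)$, the upper bound of Lemma~\ref{lem:lbound} together with $m_1<m_2$ yields
\begin{equation*}
    \max(k_{m_1},k_{m_2})\length(c)\leq a_0'm_2+b_0'.
\end{equation*}

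Next, suppose for the sake of argument that some further $m$ has $\delta_m$ also carrying $d$, say $\delta_m=d\cdot c^{k_m}$. Then $k_{m_1},k_{m_2},k_m$ are pairwise distinct, and none of $\delta_{m_1},\delta_{m_2},\delta_m$ is a local minimum of $E$, since all are min-max critical points. The iteration-index argument at the end of Section~\ref{se:minmax}, applied to this triple, gives
\begin{equation*}
    k_m < n\bigl(\max(k_{m_1},k_{m_2})+1\bigr).
\end{equation*}
This is trivial unless $k_m$ is the largest of the three exponents, in which case it is the conclusion ``$k_i<n(k_2+1)$'' of that paragraph with $k_2:=\max(k_{m_1},k_{m_2})$. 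Combined with the previous display, $k_m\length(c) < n(a_0'm_2+b_0')+n\length(c)$.

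Finally, the lower bound of Lemma~\ref{lem:lbound} (whose validity is exactly where the closedness of $M$ enters) and $\length(\delta_m)=\length(d)+k_m\length(c)$ give
\begin{equation*}
    a_0 m+b_0\leq \length(d)+n(a_0'm_2+b_0')+n\length(c),
\end{equation*}
i.e.\ $m\leq Am_2+B$ for constants $A,B>0$ depending only on the Finsler metric and on $h$ (and via $c,d$, but the dependence is absorbed through the uniform bounds of Lemma~\ref{lem:lbound}). Choosing $a>A$ and $b\geq B$ yields the statement. The only subtlety is the potential mismatch between the orderings of the labels $m_i$ and of the exponents $k_{m_i}$: it is resolved by the observation that Lemma~\ref{lem:lbound}'s upper bound controls $\max(k_{m_1},k_{m_2})$ in terms of $m_2$ alone, regardless of which of $k_{m_1},k_{m_2}$ is larger.
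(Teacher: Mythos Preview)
Your proof is correct and follows the paper's argument essentially line by line. The only point you leave implicit is how the $\length(d)$ and $n\length(c)$ terms in your final display are absorbed into constants independent of the particular $c,d$: the paper makes this explicit by observing that, since $\max(k_{m_1},k_{m_2})\geq 1$, one has $\length(d)+\length(c)\leq \length(\delta_{m_i})\leq a_0' m_2+b_0'$ for the index $i$ realizing the maximum, which is exactly the ``absorption'' you allude to parenthetically.
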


\begin{proof}
    Let $(\delta'_i) = (d\cdot c^{k_i})_i$ be the sub-sequence
    $(\delta_{m_i})$ of $(\delta_m)$ carrying the primitive chord $d$
    with $(m_i)$ being increasing
    and let $\kappa :=\max(k_1,k_2)\geq 1$.
    We have seen that $\ind(d\cdot c^\kappa)\geq 1$,
    thus $\ind(d\cdot c^{n(\kappa +1)})\geq n$ which
    implies that the finite sequence $(k_i)$ is bounded by
    $n(\kappa +1)$.
    According to Lemma~\ref{lem:lbound}, for $i=1$ and $i=2$,
    $\length(\delta'_i) = \length(d) +k_i \length(c) \leq a'm_i +b'
    \leq a'm_2+b'$ so $\kappa \leq \frac{a'm_2 +b'}{\length(c)}$.

    Then the lower bound of Lemma~\ref{lem:lbound} together
    with $k_i\leq n(\kappa +1)$ implies that
    \begin{equation*}
        m_i \leq \frac{n}{a}\kappa\length(c) +  \frac{n\length(c) +\length(d) - b}{a}.
    \end{equation*}
    Since $\kappa$ is non-zero,
    $\length(c)+\length(d)\leq a' m_2 + b'$
    and finally $m_i \leq 2\frac{a'}{a}nm_2 + 2n\frac{b'}{a}-\frac{b}{a}$.
\end{proof}

\begin{proof}[Proof of Theorem~\ref{thm:log}]
Let $A_N$ be the number of distinct primitive chords carried in
$(\delta_m)$ for $m\in\{0,\ldots , N\}$.
According to Lemma~\ref{lem:linear}, there exist $a>0$ and $b\in\R$
such that $A_{aN+b} \geq A_N +1$.
Let $a'>a$, then for sufficiently large $N$,
$a'N > aN+b$ and $A_{a'N}\geq A_N +1$.
Thus, for all $k\geq 1$,
$A_{(a')^k N}\geq A_N + k$ and there exists
$c_0 >0$ such that
\begin{equation*}
    A_m \geq \frac{\log m}{\log a'} - c_0,\quad
    \forall m\in\N.
\end{equation*}

Paths in $(\delta_m)$ for $m\in\{ 0,\ldots, N\}$ have length
$\leq cN+d$ for some $c>0$ and $d\in\R$ according to Lemma~\ref{lem:lbound}
(and they are longer than the primitive chords they carry)
therefore,
\begin{equation*}
    2\CLd{\ell}
    \geq A_{\lfloor (\ell - d)/c\rfloor} \geq \frac{\log \ell}{\log a'} - c_1,
    \quad\forall \ell >0,
\end{equation*}
for some constant $c_1>0$
(in the case $p=q$, a primitive chord and its inverse
are geometrically identical, hence the factor of $2$).
\end{proof}

We go back to the general case where $M$ is not assumed to be closed.

\begin{lem}\label{lem:quadratic}
    There exists a quadratic polynomial $P\in\R[X]$ such that
    if $\delta_{m_1}$ and $\delta_{m_2}$ 
    carry the same primitive chord $d$ for some $m_1<m_2$,
    then for all $m>P(m_2)$ the chord
    $\delta_m$ does not carry $d$.
    Coefficients of $P$ can be made continuous in the base point $q\in M$.
\end{lem}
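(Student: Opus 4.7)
My plan is to adapt the proof of Lemma~\ref{lem:linear}, replacing the linear lower bound $am+b\leq\length(\delta_m)$ (which required $M$ closed via a bounded closed $1$-form argument) by a purely algebraic identity coming from a homomorphism $\phi:\pi_1(M,q)\to\Z$. The cost of this substitution is exactly one extra factor of $m_2$, which is what turns the linear bound of Lemma~\ref{lem:linear} into a quadratic one.

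Exactly as in Lemma~\ref{lem:linear}, I first extract the sub-sequence $(\delta'_i)=(d\cdot c^{k_i})$ of $(\delta_m)$ carrying $d$ with $(m_i)$ increasing, and set $\kappa:=\max(k_1,k_2)\geq 1$. The index inequality of Section~\ref{se:minmax} still yields $k_i<n(\kappa+1)$ for every $i$. The loop $c$ is a non-constant closed geodesic through $q$ — to concatenate smoothly with itself in $c^{k_i}$ it must have matching initial and final velocity at $q$ — so a standard injectivity radius argument gives $\length(c)\geq 2\iota(q)>0$, where $\iota(q)$ denotes the injectivity radius at $q$. Combining this with the upper bound $\length(\delta'_2)\leq a'm_2+b'$ from Lemma~\ref{lem:lbound} yields $\kappa\leq(a'm_2+b')/(2\iota(q))$, hence $k_i$ is bounded by a linear function of $m_2$ with coefficients continuous in $q$.

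Since $[h]\in H_1(M;\Z)$ is of infinite order, I then fix a homomorphism $\phi:\pi_1(M,q)\to\Z$ with $e_h:=\phi(h)\neq 0$, and set $e_d:=\phi([\alpha^{-1}\cdot d])$, $e_c:=\phi([c])$. Applying $\phi$ to the identity $[\alpha^{-1}\cdot d]\cdot[c]^{k_i}=h^{m_i}$ in $\pi_1(M,q)$ yields
\[
m_i e_h = e_d + k_i e_c.
\]
Subtracting the $i=1$ and $i=2$ instances gives $(m_2-m_1)e_h=(k_2-k_1)e_c$, so $e_c\neq 0$ and
\[
\left|\frac{e_c}{e_h}\right|=\frac{m_2-m_1}{|k_2-k_1|}\leq m_2,
\]
using that $|k_2-k_1|\geq 1$ and $m_1\geq 0$. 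The $i=1$ identity then bounds $|e_d/e_h|\leq m_1+k_1|e_c/e_h|$, which is at most quadratic in $m_2$ since $k_1$ is linear in $m_2$. Finally $|m_i|\leq|e_d/e_h|+k_i|e_c/e_h|$ is also at most quadratic in $m_2$, and this defines the polynomial $P$.

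The coefficients of $P$ depend only on $\iota(q)$ and on $a',b'$ from Lemma~\ref{lem:lbound}, all continuous in $q$ for a continuous local choice of $\alpha\in\Omega_{p,q}$; the homomorphism $\phi$ itself can be kept fixed by transporting base points along a continuous path. The main obstacle I expect is conceptual rather than technical: without closedness there is no uniformly bounded closed $1$-form on $M$, so the stable norm is unavailable and one cannot hope for a linear lower bound on $\length(\delta_m)$. The homomorphism $\phi$ supplies a metric-free substitute, but the only control on the slope $|e_c/e_h|$ that two distinct solutions of $m_ie_h=e_d+k_ie_c$ provide is $|e_c/e_h|\leq m_2$, a factor of $m_2$ weaker than the $O(1)$ bound supplied by the stable norm in the closed case — this single extra factor is precisely the source of the jump from linear to quadratic.
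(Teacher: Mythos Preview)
Your proof is correct and follows essentially the same route as the paper's. The paper likewise bounds $k_i<n(\kappa+1)$ with $\kappa\leq(a'm_2+b')/\length(c)$, then fixes a homomorphism $H_1(M;\Z)\to\Z$ sending $[h]$ to a non-zero element, obtains the identity $m_i=u+k_iv$ (your $e_d/e_h$ and $e_c/e_h$), subtracts the $i=1,2$ cases to get $|v|\leq m_2$, and concludes $m_i\leq m_2+\kappa m_2+n(\kappa+1)m_2$. The only cosmetic differences are that the paper normalises so that $h\mapsto 1$ (avoiding the $e_h$ denominators) and bounds $\length(c)$ below by the systole $r:=\inf\{\length(\gamma):\gamma\in\Omega_q,\ [\gamma]\neq 0\}$ rather than the injectivity radius; your justification via the injectivity radius is fine, though note that $c$ being a non-constant geodesic loop already suffices, without the ``matching velocities'' remark.
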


\begin{proof}
    Let $(\delta'_i) = (d\cdot c^{k_i})_i$ be the sub-sequence
    $(\delta_{m_i})$ of $(\delta_m)$ carrying the primitive
    chord $d$
    with $(m_i)$ being increasing
    and let $\kappa:=\max(k_1,k_2)$.
    Similarly to the proof of Lemma~\ref{lem:linear},
    the finite sequence $(k_i)$ is bounded by
    $n(\kappa +1)$ with $\kappa\leq \frac{am_2+b}{\length(c)}$,
    where $a,b\in\R$ are given by the upper-bound of
    Lemma~\ref{lem:lbound}.
    We fix any linear projection $ H_1(M,\Z)\to\la h\ra\simeq\Z$,
    $\beta\mapsto[\beta]$.
    By definition of $m_i$, 
    $[\alpha^{-1}\cdot d] +k_i[c] = m_i$.
    Let $u:=[\alpha^{-1}\cdot d]\in\Z$
    and $v:=[c]\in\Z$.
    Since $(k_2-k_1)v= m_2-m_1$ and $m_1 < m_2$, one has $|v|\leq m_2$
    and thus $|u|\leq m_2 + k_2 m_2$.
    Finally,
    \begin{multline*}
        m_i \leq |u| + k_i |v| \leq m_2 + \kappa m_2 + n(\kappa +1) m_2 \\
        \leq m_2 + \frac{am_2+b}{r} + n\left(\frac{am_2+b}{r}+1\right)m_2
        =: P(m_2),
    \end{multline*}
    where $r := \inf_{\gamma\in\Omega_{q},\ [\gamma]\neq 0}
    \length(\gamma) >0$ depends continuously in $q\in M$.
\end{proof}

\begin{proof}[Proof of Theorem~\ref{thm:loglog}]
    This is the same proof as for Theorem~\ref{thm:log}
    but now, for sufficiently large $N$, there exists some $a>1$ such that
    $A_{aN^2}\geq A_N +1$ with the same notations.
    Thus $A_{(aN)^{2^k}}\geq A_N + k$ and there exists $c_0>0$ such that
    \begin{equation*}
        \forall m\in\N,\quad A_m \geq
        \frac{\log(\log m)}{\log 2} - c_0.
    \end{equation*}
    Since the upper-bound given by Lemma~\ref{lem:lbound}
    is still true,
    we can conclude similarly.
\end{proof}

\bibliographystyle{amsplain}
\bibliography{biblio}

\end{document}